\title{On matrix-free computation of 2D unstable manifolds\thanks{This work was supported by Le Fonds Qu\'eb\'ecois de la Recherche sur la Nature et les Technologies grant nr. 2009-NC-125259 and by a Grant-in-Aid for Scientific Research of the Japan Society for Promotion of Science}}
\author{L. van Veen\thanks{Faculty of Science, University of Ontario
    Institute of Technology, 2000 Simcoe St. N., Oshawa, ON L1H 7K4, Canada ({\tt lennaert.vanveen@uoit.ca})} \and Genta Kawahara\thanks{Department of Mechanical Science and Bioengineering, Osaka University, 1-3 Machikaneyama, Toyonaka, Osaka 560-8531, Japan ({\tt kawahara@me.es.osaka-u.ac.jp})} \and Matsumura Atsushi\footnotemark[3]}
\begin{document}
\maketitle

\begin{abstract}
Recently, a flexible and stable algorithm was introduced for the computation of 2D unstable manifolds
of periodic solutions to systems of ordinary differential equations. The main idea of this approach
is to represent orbits in this manifold as the solutions of an appropriate boundary value problem.
The boundary value problem is under determined and a one parameter family of solutions can be found by means of
arclength continuation. This family of orbits covers a piece of the manifold. The quality of this covering
depends on the way the boundary value problem is discretised, as do the tractability and accuracy
of the computation. In this paper, we describe an implementation of the orbit continuation algorithm
which relies on multiple shooting and Newton-Krylov continuation. We
show that the 
number of time integrations necessary for each continuation step
scales only with the number of shooting intervals but not with the number of degrees
of freedom of the dynamical system. The number of shooting intervals is chosen based on
linear stability analysis to keep the conditioning of the boundary
value problem in check. We demonstrate our
algorithm with two test systems: a low-order model of shear flow and a well-resolved simulation
of turbulent plane Couette flow.
\end{abstract}

\begin{keywords} 
Unstable manifold, orbit continuation, Newton-Krylov continuation, shear turbulence.
\end{keywords}

\begin{AMS}
65P99, 37N10, 34K19, 65L10
\end{AMS}

\pagestyle{myheadings}
\thispagestyle{plain}
\markboth{L. van Veen {\it et al.}}{Matrix-free computation of 2D unstable manifolds}

\section{Introduction}

In recent years, an increasing number of algorithms from numerical
dynamical systems theory have become available for systems with many
degrees of freedom. These algorithms are designed for the 
computation and continuation of
equilibrium states, time-periodic solutions, invariant tori and
connecting orbits and are usually of the
prediction-correction variety. The prediction can be based simply on data
filtered from simulations or on extrapolation of a previously computed
part of the continuation curve. The correction step, however, involves
solving a large set of coupled nonlinear equations by an iterative method.
Because of its
quadratic convergence, the most desirable method here is Newton-Raphson
iteration. This method, in turn, requires the repeated solution of a
large linear system. It is not surprising, then, that the most
significant step forward in this field was the introduction of Krylov subspace
methods for solving the linear systems. 
The combination of
Newton-Raphson iteration with a Krylov subspace method for
prediction-correction methods is now referred
to as Newton-Krylov continuation.

Newton-Krylov continuation has been used extensively in the context of
fluid dynamics, where a large system of ordinary differential
equations (ODEs) results from discretisation of the Navier-Stokes
equation and possibly 
the continuity and energy equations.
Early examples include the computation of equilibrium states in Taylor
vortex flow by Edwards {\sl et al.} \cite{edwa} and the continuation of
time-periodic solutions by S\'anchez {\sl et al.} \cite{sanch2}. More
recently, the
algorithm has also been used for the computation of quasi-periodic
solutions \cite{sanch3}. 

Now that algorithms for the study of equilibria and (quasi) periodic
solutions are available, it is natural to consider their stable and unstable
manifolds and the global dynamical structures they represent. It is a
well-known fact from dynamical systems theory that these manifolds,
and the way they intersect in phase space, play an essential role in
such phenomena as the generation of chaos, boundary crises and
bursting behaviour. The tractability of manifold computation depends
critically on the manifold dimension. A one-dimensional (un)stable
manifold is just an integral curve of the ODEs and its computation is
simple. The computation of manifolds of dimension three and up would
be a formidable task. Even if we would design an algorithm which works
regardless of the dimension of the ambient space, the representation
of the manifold in a discrete data set and its interpretation would
present great difficulties. 

In the current paper we focus on the computation of two-dimensional
invariant manifolds. A number of algorithms has been designed for this
end, mostly for low-dimensional systems. An overview of methods can be
found in Krauskopf {\sl et al.} \cite{kraus2}. One method presented
there is particularly suitable for adaption to high dimensional
systems. This method is called {\em orbit continuation} \cite[Sec. 3]{kraus2} and is based
on a representation of the integral curves which fill the manifold as
solutions to an under determined boundary value problem (BVP). Using a
conventional prediction-correction method to approximate the
continuous, one-parameter family of solutions to this BVP, we construct an approximation to the manifold by finitely
many integral curves. Being originally designed for low-dimensional
systems, this method was implemented in the BVP
solver {\sf AUTO} \cite{auto}, which uses spectral collocation to represent the
integral curves and direct methods to solve the linear equations which
arise from Newton-Raphson iteration. In adapting the method to
high-dimensional systems, we discard spectral collocation in favour of
multiple shooting and implement Newton-Krylov continuation. Thus, we
strike a compromise between control over the covering of the manifold
on one hand and tractability of the algorithm on the other hand.

Two important points to consider when applying Newton-Krylov
continuation are the efficiency of the Krylov subspace method and the
extent to which the algorithm is parallelisable. These issues can be
closely connected. A good example is given by the continuation of
periodic solutions in Navier-Stokes flow. When using single shooting,
the Jacobian matrix of the Newton-Raphson iteration is very well
handled by Krylov subspace methods. This is because its spectrum is
strongly clustered \cite{sanch2}. If, however, we employ the highly
parallelisable multiple shooting algorithm, the eigenvalues spread out
over the complex plane and preconditioning  is necessary to ensure
linear speedup \cite{sanch}. We will show that Newton-Krylov orbit
continuation with multiple shooting does not require preconditioning. In
particular, we will show that the minimal dimension of the Krylov
subspace scales linearly with the number of shooting intervals but
does not depend on the number of ODEs.

We illustrate the algorithm with two examples. The first example is a
toy model of shear flow, originally introduced by Waleffe
\cite{walef}. In Sec. \ref{ex:wal} we show Newton-Krylov convergence
results and a comparison to {\sf AUTO} computations. The second example is
a well-resolved simulation of turbulent Couette flow, presented in
Sec. \ref{ex:cou}. In this case,
the high number of degrees of freedom in the simulation precludes the
use of spectral collocation and direct linear solvers. In either case,
we compute the unstable manifold of a periodic solution with a single
unstable multiplier. This is related to
an open issue in turbulent shear flow, namely the idea that
certain time-periodic solutions with a two-dimensional unstable
manifold play a key role in bursting behaviour \cite{eckh}, which
formed the original motivation for this work. Nevertheless, the
algorithm presented here has wider applicability. If an equilibrium
state has two unstable directions, we can simply adjust the left
boundary condition, as explained below. If we wish to compute a
two-dimensional stable manifold, we can either reverse time or
rephrase the BVP, reversing the role of the left
and right boundary conditions.

\section{Orbit continuation}

Let us denote the system of ODEs and the associated linearised system as
\begin{align}
\dot{\mathbf{x}} &= \mathbf{f}(\mathbf{x})\ \ \ \text{where} \ \ \mathbf{x}\in\mathbb{R}^n \label{ODE}\\
\dot{\mathbf{v}} &= D\mathbf{f} \,\mathbf{v} \label{ODElin}
\end{align}
and let the flow of system (\ref{ODE}) be denoted by $\phi(\mathbf{x},t)$. We assume there is a hyperbolic
periodic solution passing through the point $\bar{\mathbf{x}}$, i.e. 
$\bar{\mathbf{x}}=\phi(\bar{\mathbf{x}},0)=\phi(\bar{\mathbf{x}},P)$, where $P$ is the period.
We can write the solution to the linearised system (\ref{ODElin}) about this periodic solution as
$\mathbf{v}(P)=\mathbf{M}\,\mathbf{v}(0)$, where $\mathbf{M}$ is the monodromy matrix.
We assume that $\mathbf{M}$ has a single eigenvalue outside the unit circle in the complex plane, i.e.
$|\mu_n|\leq |\mu_{n-1}|\leq \ldots \leq \mu_{2}=1\leq \mu_{1}$. Let $\mathbf{u}_1$ be the corresponding eigenvector
based at $\bar{\mathbf{x}}$, i.e. $\mathbf{M}\mathbf{u}_1=\mu_1 \mathbf{u}_1$. 
Our aim is to compute a finite piece of the two dimensional unstable
manifold, tangent at $\bar{x}$ to the linear subspace spanned by $\mathbf{u}_1$ and $\mathbf{u}_2=\mathbf{f}(\bar{\mathbf{x}})$.

Orbits segments contained in this manifold, which we will denote by $\mathbf{\gamma}(t)$, approximately satisfy
the following boundary condition:
\begin{alignat}{2}
\mathbf{\gamma}(0) &= \bar{\mathbf{x}}+\epsilon \mathbf{u}_1 &\qquad &\text{(left boundary condition)}\nonumber\\
g(\mathbf{\gamma},T) &= c &\qquad &\text{(right boundary condition)}
\label{basicBVP}
\end{alignat}
where $g$ is a scalar function or functional. The choice of a suitable right boundary condition depends
on the problem and on the goal of the computation. Common choices,
which we will use throughout this paper, are
\begin{alignat*}{2}
g(\mathbf{\gamma},T)&=T &\qquad & \text{for orbits of fixed integration time}\\
g(\mathbf{\gamma},T)&=\int_{0}^{T}|\mathbf{f}(\mathbf{\gamma}(t))|\,dt &\qquad &\text{for orbits of
constant arc length}\\
g(\mathbf{\gamma},T)&=g(\mathbf{\gamma}(T)) &\qquad &\text{for orbits terminating on a Poincar\'e surface}
\end{alignat*}
The crucial observation is that this BVP is under determined by a single unknown. For a suitable
choice of the right boundary condition, there exists a continuous, one parameter family of
solutions which covers part of the manifold. Of course,
the left boundary condition is approximate and an error is introduced for finite $\epsilon$.
However, due to the exponential contraction transversal to the unstable manifold
this error does not increase along $\mathbf{\gamma}(t)$. 

In order to compute the two-dimensional unstable manifold of an
equilibrium state, all we need to do is replace the left boundary
condition. We fix a circle with a small radius in the subspace
spanned by the two unstable eigenvectors and demand that the initial
point lie on this circle. The free parameter is an angle \cite[Sec.
  3]{kraus2}.

To see that the BVP is under determined, it is instructive to think of it as a shooting problem.
Every orbit segment $\mathbf{\gamma}(t)$ is uniquely determined by its initial point
and integration time. Then, the $(n+2)$ unknowns of this BVP are $T$, $\epsilon$ and 
the components of $\mathbf{\gamma}(0)$, whereas conditions (\ref{basicBVP}) constitute $(n+1)$ equations. 

A standard method to numerically approximate the family of solutions is  arclength continuation. 
This is a prediction-correction
method. Let us write BVP (\ref{basicBVP}) compactly as
\begin{equation}
\mathbf{F}(\mathbf{z})=0 \qquad \text{where}\ \mathbf{z}^t=(\mathbf{\gamma}(0),T,\epsilon)
\label{compact}
\end{equation}
and let $\mathbf{T}$ denote the tangent to the family of solutions. Then the basic algorithm
can be summarised as follows:

\medskip

\framebox{
\begin{minipage}{.9\textwidth}
\newcounter{steps}
\begin{list}{\arabic{steps}.}{\usecounter{steps}\setlength{\itemsep}{4pt}\setlength{\leftmargin}{0.5\labelwidth}}
\item[] {\bf Single shooting arclength continuation of BVP (\ref{basicBVP})}
\item Find an initial solution by forward integration starting at 
$\mathbf{\gamma}(0)=\bar{\mathbf{x}}+\epsilon_0 \mathbf{u}_1$ and stopping when
$g(\mathbf{\gamma},T_0)=c$. Set $\mathbf{z}_0^t=(\mathbf{\gamma}(0),T_0,\epsilon_0)$ and
find $\mathbf{T}_0$.
\item Prediction: $\mathbf{z}^0_{i+1}=\mathbf{z}_i+\Delta s \,\mathbf{T}_i$.
\item Correction: solve
\begin{equation}
\mathcal{A} \,\delta\mathbf{z}^j=
\left( 
\begin{array}{c}
\  D\mathbf{F} \  \\
  \mathbf{T}_i^t  
\end{array} 
\right) \delta\mathbf{z}^j
=-\left(\begin{array}{c}\mathbf{F}(\mathbf{z}_{i+1}^j) \\ 0 \end{array} \right)
\label{basicNR}
\end{equation}
and update $\mathbf{z}_{i+1}^{j+1}=\mathbf{z}_{i+1}^j+\delta\mathbf{z}^j$
until a Newton-Raphson convergence criterion is met. Then set $\mathbf{z}_{i+1}=\mathbf{z}_{i+1}^j$.
\item Control step size $\Delta s$.
\item Repeat 2.-4. for $i=1,2, \ldots, i_{\rm max}$.
\end{list}
\end{minipage}
}

\medskip

In order to make the Newton-Raphson correction step unique we use the
condition that $\mathbf{T}_i \perp \delta \mathbf{z}^j_{i+1}$ in Eq. (\ref{basicNR}). 
This condition can always
be met for small enough step size $\Delta s$ and does not require extra computations.
As we will see below, step 4. is important because it allows us to put an upper bound on
the changes to unknowns and thus, indirectly, to control the covering of the manifold.

The main factor which decides if this scheme is feasible numerically is the condition of
the linear problem (\ref{basicNR}). In systems with sensitive dependence on initial conditions,
the linear problem gets increasingly harder to solve as we try to compute a larger part
of the manifold. This is most clearly seen if we select the right boundary condition that
the end points of the segments lie in a Poincar\'e surface of intersection. In that
case we have
\begin{equation}
\mathcal{A}=\left(\begin{array}{ccc}
\mathbb{I}_n & \mathbf{0} & -\mathbf{u}_1 \\
(\nabla g)^t D\phi & (\nabla g)^t \mathbf{f} & 0 \\\hline
 & \mathbf{T}^t & \end{array} \right)
\end{equation}
where $D\phi$ and $\mathbf{f}$ are evaluated at $\gamma(T)$. It can be
shown that both the 2-norm and the 
condition number of $\mathcal{A}$ are bound from below by
$\max_{j=1\ldots n}|(\nabla g)_i D\phi_{ij}|$ and we can expect $\|D\phi\|$ to grow
exponentially with the integration time $T$.

\section{The limitations of spectral collocation}

Without being rigorous, we can say that the optimal solution to the problem of
sensitive dependence on initial conditions lies in the use of spectral collocation.
Rather than to approach BVP (\ref{basicBVP}) as a single shooting problem, we can
represent the orbits $\mathbf{\gamma}(t)$ on a mesh using orthogonal basis functions.
This approach is used in the widely used software package {\sf AUTO} \cite{auto}.
A survey of results using spectral collocation for orbit continuation can be found in Krauskopf 
and Osinga \cite{kraus}. The main strength of this approach is that the set of unknowns
includes the complete set of collocation coefficients. When we control the
arclength step size, we control the change in shape of the entire orbit. Thus,
we can be reasonably certain that we miss no details of the geometry of the
manifold.

This control comes at a price, however. The total number of unknowns to solve
for in every step will be proportional to $N_{\rm m}\times N_{\rm c}\times n$,
where $N_{\rm m}$ is the number of mesh intervals and $N_{\rm c}$ is the number
of collocation points per mesh interval. The minimal number of mesh intervals
in turn depends on the number of mesh intervals necessary for resolving the periodic
solution, $N_{\rm m}^{(0)}$, and the unstable multiplier, $\mu_1$. If we assume that
the dynamics close to the periodic orbit is well described by the linearisation, 
the distance to the periodic orbit will grow as $\epsilon\mu_1^p$, where $p$ is the number
of times we integrate ``along'' the periodic orbit. We can think of $p$ as the number
of iterations of a Poincar\'e map. Thus, to resolve an orbit long enough to arrive
an $\mbox{O}(1)$ distance away from the periodic orbit, we will need about
$N_{\rm m}=-N_{\rm m}^{(0)}\ln(\epsilon)/\ln(\mu_1)$ mesh intervals. 

The result of this estimate depends very much on the details of the problem at hand.
The geometry of the vector field close to the periodic orbit will determine the trade-off
between the number of mesh intervals and the number of collocation points as well as the 
maximal value for $\epsilon$. We can, however, identify three possible settings in which 
the collocation approach is not tractable:
\begin{enumerate}
\item the number of degrees of freedom is large,
\item the unstable multiplier is very close to unity or
\item the periodic orbit has a complex shape.
\end{enumerate}
In this paper, we will consider two examples in which situations 1.\ and 2.\ arise. Situation
3.\ might be encountered, for instance, in multiple time scale systems
such as arise in neuro science.

Below, we will show that multiple shooting provides a good compromise between the accurate,
but costly, collocation approach and the cheap, but unstable, single shooting approach. In
particular, we will show that the three issues listed above will influence the computation time
only through the time it takes to perform sufficiently accurate forward time integrations.

\section{Multiple shooting}

In this approach, we represent the orbit on the unstable manifold as the concatenation of $k$ segments.
For each segment, we specify a scalar right boundary condition and, in addition, we have $(k-1)$ gluing
conditions to ensure the resulting orbit is continuous.

Let us define the vector of $N=(k-1) n+k+1$ unknowns as
\begin{equation}
\mathbf{z}=(\gamma^{(2)}(0),\ldots,\gamma^{(k)}(0),\,T^{(1)},\ldots,\,T^{(k)},\,\epsilon)
\nonumber %\\
\label{unknowns}
\end{equation}
and the set of $(N-1)$ nonlinear equations as
\begin{multline}
\mathbf{F}=\left(\gamma^{(2)}(0)-\gamma^{(1)}(T^{(1)}),\ldots,\gamma^{(k)}(0)-\gamma^{(k-1)}(T^{(1)}),\,g^{(1)}(\gamma^{(1)},T^{(1)})-c_1,\ldots,\right.\\
\left. g^{(k)}(\gamma^{(k)},\,T^{(k)})-c_k\right)
\label{MSBVP}
\end{multline}
We can rewrite the basic continuation algorithm for shooting on $k$ intervals as shown below. 
%In this case, the Newton-Raphson correction step requires solving a linear problem of dimension $N$.
Instead of using direct methods, which require computation of the full matrix of derivatives, we can use a Krylov
subspace method. In particular, we will use GMRES \cite{saad}.

\medskip

\framebox{
\begin{minipage}{.9\textwidth}
\setcounter{steps}{1}
\begin{list}{\arabic{steps}.}{\usecounter{steps}\setlength{\itemsep}{4pt}\setlength{\leftmargin}{0.5\labelwidth}}
\item[] {\bf Multiple shooting Newton-Krylov continuation of BVP (\ref{MSBVP})}
\item[1.] Find an initial solution by forward integration starting from
$\mathbf{\gamma}(0)=\bar{\mathbf{x}}+\epsilon_0 \mathbf{u}_1$. Set $\mathbf{T}=(0,\ldots,0,1)^t$.
\item[2.] Prediction: $\mathbf{z}^0_{i+1}=\mathbf{z}_i+\Delta s \,\mathbf{T}_i$.
\item[3.] Correction: approximate the solution to
\begin{equation}
\mathcal{A} \,\delta\mathbf{z}^j=
\left( 
\begin{array}{c}
\  D\mathbf{F} \  \\
  \mathbf{T}_i^t  
\end{array} 
\right) \delta\mathbf{z}^j
=-\left(\begin{array}{c}\mathbf{F}(\mathbf{z}_{i+1}^j) \\ 0 \end{array} \right)
\label{basicNR2}
\end{equation}
by GMRES iterations up to tolerance $d$ and update 
$\mathbf{z}_{i+1}^{j+1}=\mathbf{z}_{i+1}^j+\delta\mathbf{z}^j$
until a Newton-Raphson convergence criterion is met. Then set $\mathbf{z}_{i+1}=\mathbf{z}_{i+1}^j$.
\item[4.] Control step size $\Delta s$.
\item[5.] Compute $\mathbf{T}$ by finite differences.
\item[6.] Repeat 2.-5. for $i=1,2, \ldots, i_{\rm max}$.
\end{list}
\end{minipage}
}

\medskip

We note that this algorithm is essentially the same as that employed by S\'anchez and Net \cite{sanch}.
Technically, the important distinction between the two algorithms is the structure of
$\mathcal{A}$. In the case of continuation of periodic orbits with multiple shooting,
its eigenvalues spread out in the complex plane and preconditioning is necessary to
ensure that the number of GMRES iterations in the innermost loop remains small. In the manifold
computation we will see that the number of GMRES iterations grows in proportion to the
number of shooting intervals but is independent of $n$ even without preconditioning.

The matrix $\mathcal{A}$ has the following structure
\begin{equation}
\mathcal{A}=\left(\begin{array}{cc} \mathcal{A}' & \mathbf{A} \\ \mathbf{B} & \mathbf{C} \end{array}\right)=
\left( \begin{array}{cccccc|c}
\mathbb{I} & & & & & & \\
-\mathbf{J}_2 & \mathbb{I} & & & & & \\
 & \ddots & \ddots & & & & \mathbf{A} \\
 & & -\mathbf{J}_{k-2} & \mathbb{I} & & & \\
 & & & -\mathbf{J}_{k-1} & \mathbb{I} & & \\\hline
 & & \mathbf{B} & & & & \mathbf{C} \\ 
\end{array}\right)
\label{structure}
\end{equation}
where $\mathbf{A}$ is a $(k-1)n \,\times \,(k+1)$ matrix of derivatives of the gluing conditions
with respect to the integration times and the small parameter, $\mathbf{B}$ is a $(k+1)\,\times\, (k-1)n$ matrix
of derivatives of the right boundary conditions with respect to the initial points
complemented by the first $(k-1) n$ components of $\mathbf{T}$
and $\mathbf{C}$ is a $(k+1)\,\times\,(k+1)$ matrix of derivatives of the right boundary
conditions with respect to the integration times and the small parameter complemented by the last $(k+1)$
components of $\mathbf{T}$. Their sparsity patterns are as follows:
\begin{alignat}{2}
A&=
\left(\!\begin{array}{cccccc}
\mathbf{a}_1 & \mathbf{0} & \cdots & \mathbf{0} & \mathbf{0} & \bar{\mathbf{a}} \\
\mathbf{0} & \mathbf{a}_2 & \cdots & \mathbf{0} & \mathbf{0} & \mathbf{0} \\
\vdots &\vdots & \ddots &\vdots &\vdots &  \vdots \\
\mathbf{0} & \mathbf{0} & \cdots & \mathbf{a}_{k-1} & \mathbf{0} & \mathbf{0}
\end{array}\!\right) 
\quad &
B&=
\left(\!\begin{array}{cccc}
\mathbf{0} &\mathbf{0} &\cdots &\mathbf{0}  \\
\mathbf{b}_2 &\mathbf{0} &\cdots &\mathbf{0} \\
\mathbf{0} & \mathbf{b}_2 & \cdots & \mathbf{0}\\
\vdots &\vdots & \ddots &  \vdots\\
\mathbf{0} &\mathbf{0} &\cdots& \mathbf{b}_k\\
\mathbf{t}_1 &\mathbf{t}_2 & \cdots& \mathbf{t}_{k-1}
\end{array}\!\right) \nonumber \\
C&=\left(\!\begin{array}{ccccc}
c_{11} & 0 & \cdots & 0 & \bar{c} \\
0 & c_{22} & \cdots & 0 & 0 \\
\vdots & \vdots & \ddots & \vdots & \vdots \\
0 & 0 &\cdots  & c_{kk} & 0 \\
t_1 & t_2 & \cdots  &t_k & t_{k+1}
\end{array}\!\right) 
\end{alignat}
Here, $\bar{\mathbf{a}}$ and each $\mathbf{a}_i$ is a column vector
with $n$ elements and each $\mathbf{b}_i$ and $\mathbf{t}_i$ is a row
vector with $n$ elements.
Because of the partitioning of $\mathcal{A}$, it is useful to
introduce the notation
$$
\mathbf{v}=(\mathbf{v}^{(1)},\mathbf{v}^{(2)})^t=(\mathbf{v}_1^{(1)},\ldots,\mathbf{v}_{k-1}^{(1)},v_{1}^{(2)},\ldots,v_{k+1}^{(2)})^t
$$
where each $\mathbf{v}_i^{(1)}$ is a column vector with $n$ entries
and each $v^{(2)}_i$ is a scalar. 

In order to establish the upper bound on the number of GMRES
iterations we need the following lemma.
\begin{lemma}\label{eigenv}
Matrix $\mathcal{A}$ has eigenvalue $\lambda_0=1$ with algebraic multiplicity at least $(k-1)(n-1)$ 
and geometric multiplicity at least $(n-1)$
\end{lemma}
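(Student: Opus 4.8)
The plan is to prove the two bounds separately, in both cases exploiting the structure of the leading block $\mathcal{A}'$ in (\ref{structure}). Since $\mathcal{A}'$ is block lower triangular with $\mathbb{I}_n$ on every diagonal block, all of its eigenvalues equal $1$, with algebraic multiplicity $(k-1)n$; moreover $\mathcal{A}'-\mathbb{I}$ is strictly lower block-bidiagonal with the \emph{invertible} flow Jacobians $-\mathbf{J}_i$ on its subdiagonal, so it is nilpotent of index $k-1$. Writing $\mathcal{A}'-\lambda\mathbb{I}=(1-\lambda)\mathbb{I}-\mathbf{L}$, where $\mathbf{L}$ carries the $\mathbf{J}_i$ on its block subdiagonal, one has the key expansion $(\mathcal{A}'-\lambda\mathbb{I})^{-1}=\sum_{m=0}^{k-2}(1-\lambda)^{-(m+1)}\mathbf{L}^m$. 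Conceptually, eigenvalue $1$ of $\mathcal{A}'$ is organised into $n$ Jordan chains of length $k-1$, and the bordering by $\mathbf{A},\mathbf{B},\mathbf{C}$ of width $k+1$ can destroy at most one of these chains; this is the picture behind both bounds.

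For the geometric multiplicity I would write an eigenvector as $\mathbf{v}=(\mathbf{v}^{(1)},\mathbf{v}^{(2)})$ and split $(\mathcal{A}-\mathbb{I})\mathbf{v}=0$ into $(\mathcal{A}'-\mathbb{I})\mathbf{v}^{(1)}=-\mathbf{A}\mathbf{v}^{(2)}$ and $\mathbf{B}\mathbf{v}^{(1)}=(\mathbb{I}-\mathbf{C})\mathbf{v}^{(2)}$. Reading the first block-row of the top equation gives $\mathbf{a}_1 v^{(2)}_1+\bar{\mathbf{a}}\,v^{(2)}_{k+1}=0$; because $\mathbf{a}_1=-\mathbf{J}_1\mathbf{f}(\gamma^{(1)}(0))$ and $\bar{\mathbf{a}}=-\mathbf{J}_1\mathbf{u}_1$ with $\mathbf{J}_1$ invertible and $\mathbf{f}(\gamma^{(1)}(0))$ (near $\mathbf{u}_2=\mathbf{f}(\bar{\mathbf{x}})$) independent of $\mathbf{u}_1$, this forces $v^{(2)}_1=v^{(2)}_{k+1}=0$. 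The remaining block-rows then fix $\mathbf{v}_1,\ldots,\mathbf{v}_{k-2}$ through $\mathbf{v}_{i-1}=v^{(2)}_i\mathbf{J}_i^{-1}\mathbf{a}_i$, while $\mathbf{v}_{k-1}$ stays free. The decisive point is that, once $v^{(2)}_1=v^{(2)}_{k+1}=0$, the first row of the bottom equation collapses to $0=0$, since $\mathbf{B}$ has a zero first row and the first row of $\mathbf{C}$ couples only $v^{(2)}_1$ and $v^{(2)}_{k+1}$. Thus at most $k$ scalar constraints act on the $n+(k-1)$ free parameters $(\mathbf{v}_{k-1},v^{(2)}_2,\ldots,v^{(2)}_k)$, so $\dim\ker(\mathcal{A}-\mathbb{I})\geq (n+k-1)-k=n-1$.

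For the algebraic multiplicity I would use the Schur-complement factorization, valid for $\lambda\neq 1$,
\begin{equation}
\det(\mathcal{A}-\lambda\mathbb{I})=(1-\lambda)^{(k-1)n}\det S(\lambda),\qquad S(\lambda)=(\mathbf{C}-\lambda\mathbb{I})-\mathbf{B}(\mathcal{A}'-\lambda\mathbb{I})^{-1}\mathbf{A},
\nonumber
\end{equation}
where the prefactor is $\det(\mathcal{A}'-\lambda\mathbb{I})$. Using the expansion above, $S$ is a Laurent polynomial in $(1-\lambda)$ whose pole at $\lambda=1$ originates from the constant matrices $\mathbf{B}\mathbf{L}^m\mathbf{A}$. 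It then suffices to show that $\det S$ has a pole of order at most $k-1$ at $\lambda=1$: the factorization immediately yields a zero of $\det(\mathcal{A}-\lambda\mathbb{I})$ of order at least $(k-1)n-(k-1)=(k-1)(n-1)$, which is the claimed algebraic multiplicity.

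The main obstacle is exactly this pole-order bound. Entrywise the poles already reach order $k-1$, and a naive permutation count in $\det S$ admits products of order far larger than $k-1$, so the bound can only come from systematic cancellation. The leverage is the sparsity inherited from the BVP: $\mathbf{A}$ has a vanishing $T^{(k)}$ column, $\mathbf{B}$ has a vanishing first row, and $\mathbf{C}$ is sparse. In particular the top-order term $(1-\lambda)^{-(k-1)}\mathbf{B}\mathbf{L}^{k-2}\mathbf{A}$ is supported only on the $2\times2$ block of rows $\{k,k+1\}$ and columns $\{1,k+1\}$, and any permutation that would stack two such factors is then forced onto a vanishing entry of $\mathbf{C}$ (for $k=2$ this is $\mathbf{C}_{12}=0$), killing the high-order contribution. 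Carrying this cancellation through the lower-order pole terms for general $k$ — equivalently, constructing explicit Jordan chains of length $k-1$ over $n-1$ of the eigenvectors found above and checking the successive range (solvability) conditions — is the technical heart of the argument.
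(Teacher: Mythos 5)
Your argument for the geometric multiplicity is sound: splitting $(\mathcal{A}-\mathbb{I})\mathbf{v}=0$ into its two block equations, using the transversality of $\mathbf{a}_1$ and $\bar{\mathbf{a}}$ to force $v^{(2)}_1=v^{(2)}_{k+1}=0$, and counting $n+k-1$ remaining free parameters against $k$ effective scalar constraints does give $\dim\ker(\mathcal{A}-\mathbb{I})\geq n-1$. This is packaged slightly differently from the paper, which exhibits $n-1$ explicit eigenvectors supported on the last shooting segment (the subspace it calls $R_1$), but it rests on the same two facts (transversality of $\mathbf{a}_1,\bar{\mathbf{a}}$ and the sparsity of $\mathbf{B}$ and $\mathbf{C}$) and is correct.

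The algebraic multiplicity is where the proposal is genuinely incomplete, and you say so yourself. The Schur-complement identity $\det(\mathcal{A}-\lambda\mathbb{I})=(1-\lambda)^{(k-1)n}\det S(\lambda)$ is correct, but the claim that $\det S$ has a pole of order at most $k-1$ at $\lambda=1$ is \emph{equivalent} to the multiplicity bound you are trying to prove, so the factorization is a reformulation rather than a reduction. Checking the leading term $(1-\lambda)^{-(k-1)}\mathbf{B}\mathbf{L}^{k-2}\mathbf{A}$ does not suffice: in the determinant expansion of the $(k+1)\times(k+1)$ matrix $S$, products of several lower-order pole terms $\mathbf{B}\mathbf{L}^{m}\mathbf{A}$ drawn from different rows can accumulate a pole of order well above $k-1$, and ruling all of these out is exactly the systematic cancellation you defer. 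The heuristic that a bordering of width $k+1$ ``can destroy at most one Jordan chain'' is not a theorem for non-normal matrices, so it cannot stand in for that computation. The paper closes this gap by a different mechanism: it computes the left null space of $\mathcal{A}-\mathbb{I}$ explicitly and shows it is contained in the subspace $L$ of vectors supported on $(\mathbf{v}^{(1)}_1,v^{(2)}_1)$, which is orthogonal to every subspace $R_s$, $s\leq k-2$, in which the (generalised) eigenvectors are sought. Fredholm's alternative then guarantees that $(\mathcal{A}-\mathbb{I})\mathbf{w}_{s+1,i}=\mathbf{w}_{s,i}$ is solvable at each stage, and a short computation (Eqs. (\ref{c0})--(\ref{c2})) shows the preimage can be chosen in $R_{s+1}$, so each of the $n-1$ eigenvectors heads a chain of length $k-1$. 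Some version of this left-null-space argument, or the equivalent explicit chain construction, is the missing ingredient in your proposal.
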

\begin{proof}
We will show that a minimal set of $(n-1)$ linearly independent
eigenvectors exists for eigenvalue
$\lambda_0$ irrespective of the choice of boundary
conditions. We label these eigenvectors $\mathbf{w}_{1,i}$ ($i=1,\ldots,n-1$). Additional eigenvectors appear for each boundary condition
given by constant integration time and possibly at isolated points on
the continuation curve. Each of the eigenvectors in the minimal set has a preimage under
$(\mathcal{A}-\mathbb{I})^s$ for $s=1,\ldots,k-2$. We label the generalised eigenvectors
$\mathbf{w}_{s,i}$ so that
\begin{align}
(\mathcal{A}-\mathbb{I})\mathbf{w}_{s+1,i}&=\mathbf{w}_{s,i}\ \text{for}\ s=1,\ldots,k-2
  \label{evcondition1}\\
(\mathcal{A}-\mathbb{I})\mathbf{w}_{1,i}&=0 \label{evcondition2}
\end{align}
To prove this we
use induction on $s$. In the induction step we must prove the
existence of a generalised eigenvector $\mathbf{w}_{s+1,i}$ going on
the equation which $\mathbf{w}_{s,i}$ satisfies. For this end we use Fredholm's
alternative. Thus, we first examine the left null space of the
operator and introduce some notation that helps exploit the sparsity
patterns of the (generalised) eigenvectors.

Let $R_q$ denote the linear subspace of vectors with the
following sparsity pattern
$$
\mathbf{v}=(\mathbf{0},\ldots,\mathbf{0},\mathbf{v}_{k-q}^{(1)},\ldots,\mathbf{v}_{k-1}^{(1)},0,\ldots,0,v_{k+1-q}^{(2)},\ldots,v_{k}^{(2)},0)^t
$$
for $q=1,\ldots,k-1$. Also,
let $L$ denote the linear subspace of vectors with the
following sparsity pattern
$$
\mathbf{v}=(\mathbf{v}_1^{(1)},\mathbf{0},\ldots,\mathbf{0},v_{1}^{(2)},0,\ldots,0)^t
$$
and note that $R_1\subset R_2 \subset \ldots \subset R_{k-1}$ and $L\perp R_q$ for $q=1,\ldots,k-2$. 

First, we will assume that the functions $g^{(i)}$ which determine the right boundary conditions depend on the
initial condition $\gamma^{(i)}(0)$ on each shooting interval. This is the case for
right boundary conditions given by Poincar\'e planes of intersection
or constant arclength. The case in which one or more right boundary
conditions are given by constant integration time is discussed at the
end of the proof.

Under this condition, the null space of $(\mathcal{A}-\mathbb{I})^t$ is spanned by vectors $\mathbf{v}\in L$
for which
\begin{alignat}{2}
\mathbf{a}_1\cdot \mathbf{v}_1^{(1)}+(c_{11}-1) v_1^{(2)}&=0 \qquad &
\bar{\mathbf{a}}\cdot \mathbf{v}_1^{(1)}+\bar{c}v_{1}^{(2)}&=0
\end{alignat}
Note, that these conditions are linearly independent because
$\mathbf{a}_1$ and $\bar{\mathbf{a}}$ are transversal. They correspond
to variations of the final point of the first shooting segment
resulting from variation in $T^{(1)}$ and $\epsilon$. These variations
are the image under the nonsingular matrix $\mathbf{J}_1$ of
$\mathbf{f}(\bar{\mathbf{x}}+\epsilon \mathbf{u}_1)$ and
$\mathbf{u}_1$, which are transversal for sufficiently small $\epsilon$
because in the limit of $\epsilon\downarrow 0$ they are eigenvectors of
the monodromy matrix of the periodic orbit for distinct eigenvalues.
Thus, we have $\text{dim}(L)-2=(n-1)$ left null vectors contained in $L$.

We now construct $(n-1)$ eigenvectors contained in $R_1$. Let
\begin{equation}
R_1\ni \mathbf{w}_{1,i}=(\mathbf{0},\ldots,\mathbf{v}_{k-1}^{(1)},0,\ldots,v_{k}^{(2)},0)^t
\end{equation}
then Eq. (\ref{evcondition2}) gives
\begin{alignat}{2}
\mathbf{b}_k\cdot \mathbf{v}_{k-1}^{(1)}+(c_{kk}-1) v_{k}^{(2)}&=0 &\qquad  \mathbf{t}_{k-1}\cdot \mathbf{v}_{k-1}^{(1)}+t_{k}
v_{k}^{(2)}&=0
\label{ev0}
\end{alignat}
so that the number of eigenvectors is at least $\text{dim}(R_1)-2=n-1$.
An extra eigenvector may exist at isolated points on the continuation
curve where the two condition are linearly dependent. We defer a discussion of such special
points to the end of this proof.
This concludes the proof for $k=2$. 

For $k=3$ it suffices to note that, since $R_1\perp L$,
Eq. (\ref{evcondition1}) has a solution to each $i=1,\ldots,n-1$ by
Fredholm's alternative. In this case there are $(n-1)$ linearly
independent eigenvectors and $(n-1)$ linearly independent generalised eigenvectors.

For $k>3$ we use induction on $s$. Suppose that a generalised eigenvector $\mathbf{w}_{s,i}\in R_s$
exists for $s<k-1$. Then it has a preimage under $(\mathcal{A}-\mathbb{I})$ by Fredholm's alternative.
We need to show that this preimage has a non-empty intersection with $R_{s+1}$.
Let
$\mathbf{w}_{s+1,i}=(\mathbf{v}_1^{(1)},\ldots,\mathbf{v}_{k-1}^{(1)},v_{1}^{(2)},\ldots,v_{k+1}^{(2)})^t$.
The condition that
$(\mathcal{A}-\mathbb{I})\mathbf{w}_{s+1,i}\in R_{s}$ gives
\begin{alignat}{2}
v_{1}^{(2)}\mathbf{a}_1+v_{k+1}^{(2)}\bar{\mathbf{a}}&=0 & \qquad
(c_{11}-1)v_{1}^{(2)}+\bar{c}v_{k+1}^{(2)}&=0
\label{c0}
\end{alignat}
from which we find $v_{1}^{(2)}=v_{k+1}^{(2)}=0$. Next, we have 
\begin{align}
-\mathbf{J}_i\mathbf{v}_{i-1}^{(1)}+v_{i}^{(2)}\mathbf{a}_i&=-\mathbf{J}_i\mathbf{v}_{i-1}^{(1)}-v_{i}^{(2)}\mathbf{f}(\gamma^{(i)}(T^{(i)}))= 0 \label{c1}\\
\mathbf{b}_i\cdot\mathbf{v}_{i-1}^{(1)}+(c_{ii}-1)v_{i}^{(2)}&=0 \label{c2}
\end{align}
for $i=2,\ldots,k-s-1$. The most general solution of Eq. (\ref{c1}) is
$\mathbf{v}_{i-1}^{(1)}=\alpha \mathbf{f}(\gamma^{(i)}(0))$,
$v_i^{(2)}=-\alpha$. The second equation depends on the $i^{\rm th}$
  boundary condition.
If the boundary condition is a Poincar\'e plane of
intersection, then $\mathbf{b}_i=(\nabla g_i)^t\mathbf{J}_i$ and
$c_{ii}=(\nabla g_i)^t \mathbf{f}(\gamma^{(i)}(T^{(i)}))=-(\nabla g_i)^t \mathbf{a}_i$
and it follows that $\mathbf{w}_{i-1}=\mathbf{0}$ and $w_{i}^{(1)}=0$.
If the right boundary condition is given by constant arc length, a
solution with $\alpha\neq 0$ exists only if
$\|\mathbf{f}(\gamma^{(i)}(0))\|=1$. At isolated points on the
continuation curve where this holds, an extra eigenvector for
$\lambda_0$ exists. This situation is discussed at the end of the proof.
In the generic case, the conclusion is that any vector in the preimage
of $\mathbf{w}_{s,i}$ is contained in $R_{s+1}$. 
This completes the proof, with the exception of the special cases
discussed below.

For each right boundary condition given by constant integration time,
an extra eigenvector for eigenvalue $\lambda_0$ exists. In particular,
if $g^{(i)}=T^{(i)}$ for $i>1$ then there is a right eigenvector
$\mathbf{z}\in R_{k+1-i}$ and a left eigenvector $\mathbf{e}_{(k-1)n+i}$,
i.e. the $([k-1]n+i)^{\rm th}$ unit direction vector. It is
straightforward to see that $\mathbf{e}^t_{(k-1)n+i}\mathbf{z}\neq 0$
so that this eigenvector does not have any generalised eigenvectors
associated with it. The proof by induction for the other (generalised)
eigenvectors still holds. The only difference is that if we consider
Eq. (\ref{evcondition1}) in the induction step we find that the
preimage of $\mathbf{w}_{s,i}$ is not contained in $R_{s+1}$. However,
its intersection with $R_{s+1}$ is non-empty. If $g^{(1)}=T^{(1)}$
then no additional eigenvector exists. Instead, one of the
eigenvectors in the minimal set has an additional preimage.

In the exceptional cases that Eqs. (\ref{ev0}) are linearly dependent
or that Eqs. (\ref{c1})-(\ref{c2}) allow for a nonzero solution, 
an additional eigenvector exists. These cases are treated just like the
appearance of an additional eigenvector discussed above. Again, it is
straightforward to show that the preimage of each generalised
eigenvector $\mathbf{w}_{s,i}$ intersects the right subspace $R_{s+1}$.
\end{proof}

Of course, the discussion of the special cases which arise only at
isolated points on the continuation curve is somewhat academic, as we
will compute a discrete approximation to this curve. It is good to
know, however, that no drastic changes in the eigenspectrum of
$\mathcal{A}$ occur. As we will see in the following proposition, this
guarantees that a global maximum for the number of GMRES iterations
can be computed a priori.

\begin{proposition}\label{max_dim}
Assume that all eigenvalues of $\mathcal{A}$ other than $\lambda_0=1$
are simple. Then
the number of GMRES iterations necessary to solve (\ref{basicNR2}) is
at most $(3k-1)$
with exact arithmetic.
\end{proposition}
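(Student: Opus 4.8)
The plan is to reduce the statement to a purely spectral fact about $\mathcal{A}$ and then read off the answer from Lemma~\ref{eigenv}. The starting point is the finite-termination property of GMRES in exact arithmetic: since $\mathcal{A}$ is nonsingular, its $m$-th residual equals the minimum of $\|p(\mathcal{A})\mathbf{r}_0\|$ over all polynomials $p$ with $p(0)=1$ and $\deg p\le m$, and choosing $p$ proportional to the minimal polynomial of $\mathcal{A}$ annihilates $\mathbf{r}_0$. Hence the number of iterations needed to reach the exact solution is bounded by $\deg$ of the minimal polynomial of $\mathcal{A}$, which in turn equals $\sum_{\mu}\iota(\mu)$, the sum over the distinct eigenvalues $\mu$ of the index $\iota(\mu)$, i.e. the size of the largest Jordan block attached to $\mu$. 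So it suffices to show that this sum is at most $3k-1$.

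Next I would split the spectrum into $\lambda_0=1$ and the rest. By hypothesis every other eigenvalue is simple, so each such $\mu$ has $\iota(\mu)=1$ and contributes exactly one to the sum; if $a$ denotes the algebraic multiplicity of $\lambda_0$, there are exactly $N-a$ of them, where $N=(k-1)n+k+1$. Writing $\iota_0:=\iota(\lambda_0)$, the minimal-polynomial degree is therefore $\iota_0+(N-a)$, and the target bound $3k-1$ reduces to the inequality $a-\iota_0\ge (k-1)(n-2)$, since a short computation gives $3k-1-N=-(k-1)(n-2)$. The whole argument thus comes down to controlling the Jordan structure of the defective eigenvalue $\lambda_0$.

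Here Lemma~\ref{eigenv} does the work. It produces $(n-1)$ Jordan chains $\mathbf{w}_{1,i},\dots,\mathbf{w}_{k-1,i}$ for $\lambda_0$, that is, at least $(n-1)$ Jordan blocks each of size at least $k-1$. Ordering the block sizes as $s_1\ge s_2\ge\cdots\ge s_g$ with $g\ge n-1$, we have $\iota_0=s_1$ and $a=\sum_j s_j$, so $a-\iota_0=\sum_{j\ge 2}s_j$. Removing only the single largest block still leaves at least $n-2$ of the guaranteed large blocks in the sum, each of size at least $k-1$, whence $a-\iota_0\ge (k-1)(n-2)$, which is exactly what is required. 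I would then check that this argument is robust to the special configurations catalogued in Lemma~\ref{eigenv}: each extra size-one block for $\lambda_0$ (one per constant-integration-time condition) raises $a$ while leaving $\iota_0$ fixed, so $\iota_0+(N-a)$ can only decrease, and in the $g^{(1)}=T^{(1)}$ case one chain lengthens to $k$, raising both $\iota_0$ and $a$ by one, so the bound is attained with equality.

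The step I expect to be delicate is this last one: being sure that the $(n-1)$ chains of the lemma really correspond to distinct Jordan blocks, so that their sizes may legitimately be summed, and that in the generic case no chain is longer than $k-1$, so that $\iota_0=k-1$ rather than something larger. This is precisely where the nesting $R_1\subset R_2\subset\cdots\subset R_{k-1}$ and the transversality relations established in Lemma~\ref{eigenv} must be invoked to cap the chain length and to rule out hidden generalised eigenvectors, and where the bookkeeping against the isolated special points on the continuation curve has to be verified. Once the inequality $a-\iota_0\ge(k-1)(n-2)$ is secured, the remaining arithmetic of substituting $N=(k-1)n+k+1$ is routine.
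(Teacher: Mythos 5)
Your proposal is correct and follows essentially the same route as the paper: both rest on the polynomial characterisation of the GMRES residual together with the Jordan structure of $\lambda_0=1$ supplied by Lemma~\ref{eigenv}. The only difference is packaging --- the paper writes down the Jordan normal form explicitly and exhibits the annihilating polynomial $(\mathcal{A}_{\rm J}-\mathbb{I})^{k-1}\Pi_{i=1}^{2k}(\mathcal{A}_{\rm J}-\lambda_i\mathbb{I})$ of degree $3k-1$, whereas you bound the degree of the minimal polynomial by the counting inequality $a-\iota_0\ge(k-1)(n-2)$, which amounts to the same computation and handles the exceptional configurations in the same way the paper's closing paragraph does.
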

\begin{proof}
First, assume that the right boundary conditions depend on the initial
conditions on each shooting interval. Then,
by Lemma \ref{eigenv}, $\mathcal{A}$ has the Jordan normal form
$$
\mathcal{A}_{\rm J}=\mathbf{Q}\mathcal{A}\mathbf{Q}^{-1}=\left(\begin{array}{cccccc} \mathbf{M} & & & & & \\
 & \ddots & & & & \\
 & & \mathbf{M} & & & \\
 & & & \lambda_1 & & \\
 & & & & \ddots & \\
 & & & & & \lambda_{2k}\end{array}\right)
$$
where $\mathbf{M}$ is a Jordan block of dimension $(k-1)$ such that $(\mathbf{M}-\mathbb{I})^{k-1}=\mathbf{0}$.
After $p$ GMRES iterations with the initial vector $\mathbf{w}$, the residue is bound from above by
$$
\min_{P_p(0)=1} \|P_p(\mathcal{A})\mathbf{w}\|=\min_{P_p(0)=1} \|\mathbf{Q}^{-1}P_p(\mathcal{A}_{\rm J})\mathbf{Q}\mathbf{w}\|
\leq \kappa(\mathbf{Q}) \|\mathbf{w}\| \min_{P_p(0)=1}\|P_p(\mathcal{A}_{\rm J})\|
$$
where the minimum is taken over all polynomials of order $p$ which satisfy $P_p(\mathbf{0})=\mathbb{I}$.
Then we have
$$
P_{3k-1}(\mathcal{A}_{\rm J})=\frac{(-1)^{k-1}}{\Pi_{i=1}^{2k}\lambda_i}(\mathcal{A}_{\rm J}-\mathbb{I})^{k-1} \Pi_{i=1}^{2k} 
(\mathcal{A}_{\rm J}-\lambda_i \mathbb{I})=\mathbf{0}
\label{proof}
$$
As $P_{3k-1}$ is a polynomial of order $3k-1$ this proves the
proposition.

For every right boundary condition independent of the initial
condition one of the simple eigenvalues is equal to $\lambda_0$. Thus,
for each of these we can omit one GMRES iteration. The only exception
is the first shooting interval. If the first boundary condition is
independent of the initial condition, in that case meaning independent
of variation of
$\epsilon$, we have one less simple eigenvalue different from
$\lambda_0$, but one of the Jordan blocks is of dimension $k$, so that
the minimal number of iterations is $3k-1$.
\end{proof}

\section{Implementation and parallelism}

The Newton-Krylov continuation algorithm is easy to parallelise. For
each iteration of GMRES we need to compute the matrix-vector product
$\mathcal{A}\mathbf{v}$ for some given perturbation vector
$\mathbf{v}$. The constituents of this product are found by
integrating the extended system (\ref{ODE}-\ref{ODElin}) on each of
the shooting intervals. These integrations are all independent and can
be executed on different CPUs. The matrix-vector product is then
formed by the root process. This step involves only $\mbox{O}(N)$
elementary operations and the communication of vectors of length
$n$. Consequently, the overhead is very small and the examples below
show nearly $100\%$ efficiency of the parallelisation.

In the first example, the set of ODEs is only of dimension $17$ and
the essential loop of the code is easily parallelised with {\sf openMP}. 
In the second example, each integration is done by a pseudo-spectral
Navier-Stokes simulation code~\cite{itano}.
In this case, distributed memory {\sf MPI}
parallelisation is employed.

We need to make two remarks about the stability of the
algorithm. Firstly, it is more stable to use a logarithmic scale for
the small parameter, i.e. $\delta=\ln(\epsilon)$. If, in addition, we
normalise the integration times by the period of the unstable periodic
orbit, the dependent variables in the continuation are of comparable
size - assuming that the phase space variables are suitably scaled.
Secondly, the matrix $\mathcal{A}$ is ill-conditioned. The condition
number can be expected to increase exponentially with the integration
time on each shooting interval. As a consequence, the
orthogonalisation of the basis of the Krylov subspace can be
unstable. This problem is largely solved by using a $QR$-decomposition
based on Householder transformations.

The third remark concerns the left boundary condition in the BVP (\ref{basicBVP}). In order
to improve the accuracy of the manifold computation we can add a
second order term to its local approximation. Using a higher order
approximation, we can generally allow for larger values of $\epsilon$ in the continuation.
To compute the second order term, we
need to integrate the second order variational equations along the
periodic orbit. This is not normally feasible for high-dimensional
systems.

Finally, if the system under consideration has a strong dependence on initial conditions, we 
must start the continuation with a short orbit obtained by forward time integration.
We can choose a Poincar\'e plane which intersects with the periodic orbit for a right boundary
condition and let the integration time increase in the arclength continuation, adding shooting
intervals when necessary. When the computed orbit is long enough,
we can switch to a different boundary condition. The flexibility of the algorithm to select 
different parts of the manifold for computation by selecting different boundary conditions is 
one of its main strengths.

\section{Example computations}

In the following section we will describe test computations with a toy
model as well as a full-fledged simulation of turbulent shear flow. In
both models, there exists a periodic solution which seems to organise
the phenomenon of bursting. In a bursting flow, we see turbulent
episodes, during which the fluid motion is highly complex, interspersed
with nearly laminar episodes, during which the motion is
smooth. The periodic solution of interest has  a single unstable
multiplier and as a consequence its stable manifold separates the
phase space~\cite{kawa05}.
Special solutions like this are sometimes called {\em
  edge states} \cite{eckh}. In the simplest explanation of bursting, the phase point
is attracted to the edge state during the laminarisation, then moves
away from the edge state along a two-dimensional unstable manifold
during the bursting phase. Complete laminarisation will not occur
because the domain of attraction of the laminar flow is bounded by the
stable manifold of the edge state. Computation of the unstable
manifold we will give information about the transition from a
near-laminar configuration to a turbulent one. 

\subsection{A low-order model of shear flow: weak
  instability\label{ex:wal}}

For the first example computation we use a model for shear flow originally introduced
by Waleffe \cite{walef}. The model is obtained as a Galerkin
truncation of the Navier-Stokes equation for an incompressible fluid trapped between two
infinite, parallel plates with free slip boundary conditions. Energy
is input by a sinusoidal body force and Fourier modes are used in all
directions. Waleffe formulated this model to demonstrate the
regeneration cycle in shear flows, i.e. the repeated formation and
breakdown of stream wise vortices and low velocity
streaks. Accordingly, the modes retained in the Galerkin truncation
were chosen to have the spatial symmetries of stream wise vortices,
streaks and streak instabilities. In the original paper \cite{walef},
the maximal wave number was set to $2$ in the wall-normal direction
and $1$ in the stream wise and span wise directions. Here, we consider maximal
wave numbers $3$ and $1$, respectively, which leads to a set of $17$
nonlinear, coupled ODEs.

Obviously, such a severe truncation can only be regarded as a toy
model of shear flow. Remarkably though, the low-order model has many
of the qualitative traits that make shear flow so challenging from a
dynamical systems point of view. First of all, there exists a linearly
stable laminar solution for all Reynolds number. Secondly, for high
Reynolds numbers, solutions to the ODE typically show chaotic bursts
interspersed with smooth behaviour. The model introduced by Waleffe has often been
used as a test case for new ideas and algorithms for parsing turbulent
bursting, see e.g. Moehlis {\sl et al.} \cite{moeh} and references therein.

\begin{figure}
\begin{center}
\includegraphics[width=0.9\textwidth]{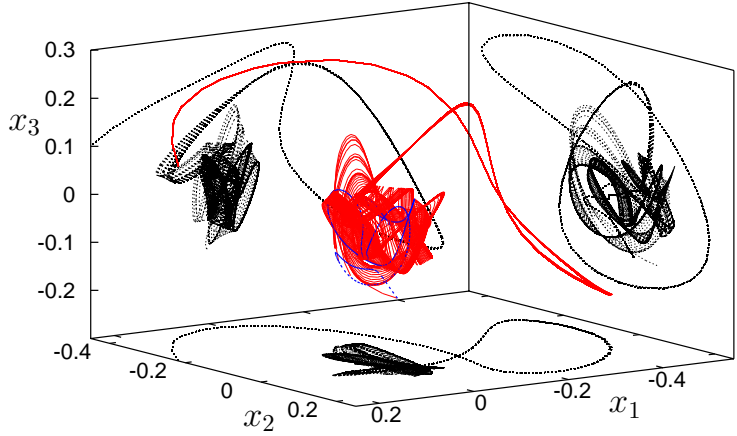}
\end{center}
\caption{A piece of the unstable manifold of the periodic orbit at the
edge of chaos in a toy model of shear flow. On the axes are the first
three Fourier coefficients. The blue curve is the intersection of this
piece of manifold with the plane $x_1=0$, the rightmost
boundary condition. The periodic orbit is located near the equilibrium 
$x_1=1$, $x_i=0$ for $i=2,\ldots n$ and is not shown. The computed orbits 
remain close until they reach the chaotic region, where they flare out
exponentially.
The corresponding continuation diagram is shown in
Fig. \ref{cont_curve_wal}.}
\label{man_wal}
\end{figure}

In our model we fix the stream wise to wall normal aspect ratio to
$L/H=2.76$
and the span wise to wall normal aspect ration to $W/H=1.88$,
corresponding to the minimal flow unit of plane Couette flow~\cite{hamilton}.
A bifurcation analysis reveals that at $Re=109$ a saddle type and a
stable periodic solution are created in a saddle-node bifurcation. The
stable orbit loses stability in a torus bifurcation at $Re=256$. The
saddle type orbit has a single unstable multiplier for any
$Re>109$. This orbit is a small perturbation of the laminar state and
can be considered an edge state. In the following, we fix $Re=667$ and
compute the unstable manifold of this edge state.
A difficulty in this computation is that the unstable multiplier is
close to unity. In the example computation it is $\mu_1=1.055$.

\begin{figure}
\begin{center}
\includegraphics[width=0.7\textwidth]{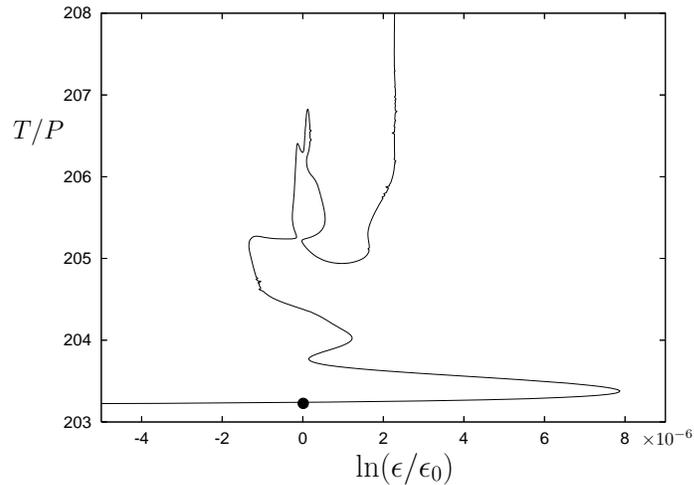}
\end{center}
\caption{The continuation curve corresponding to the piece of manifold
  shown in Fig. \ref{man_wal}. On the horizontal axis the small
  parameter which fixes the left boundary condition, on the vertical
  axis the total integration time. In this continuation, there were
  five shooting intervals and the boundary conditions were given by
  fixed integration time for the first interval and a
  Poincar\'e section $x_1=\text{constant}$ for the others.}
\label{cont_curve_wal}
\end{figure}

Fig. \ref{man_wal} shows a piece of the unstable manifold, computed on
five shooting intervals with a quadratic local approximation. The
right boundary conditions were fixed integration time for the first
interval and a Poincar\'e section for the other intervals. Near
the end of the computed orbits there is very strong dependence on
initial conditions and the geometry of the manifold is quite
complex. The variations in $\epsilon$ are extremely small.
Fig. \ref{cont_curve_wal} shows the corresponding continuation
curve. In this graph, a fold point corresponds to an orbit which is
tangent to one of the Poincar\'e planes of intersection. A detailed
account of the geometry of the manifold in the vicinity of such points
can be found in Lee {\sl et al.} \cite{lee}.

\begin{figure}
\begin{picture}(200,200)
\put(0,0){\epsfig{file=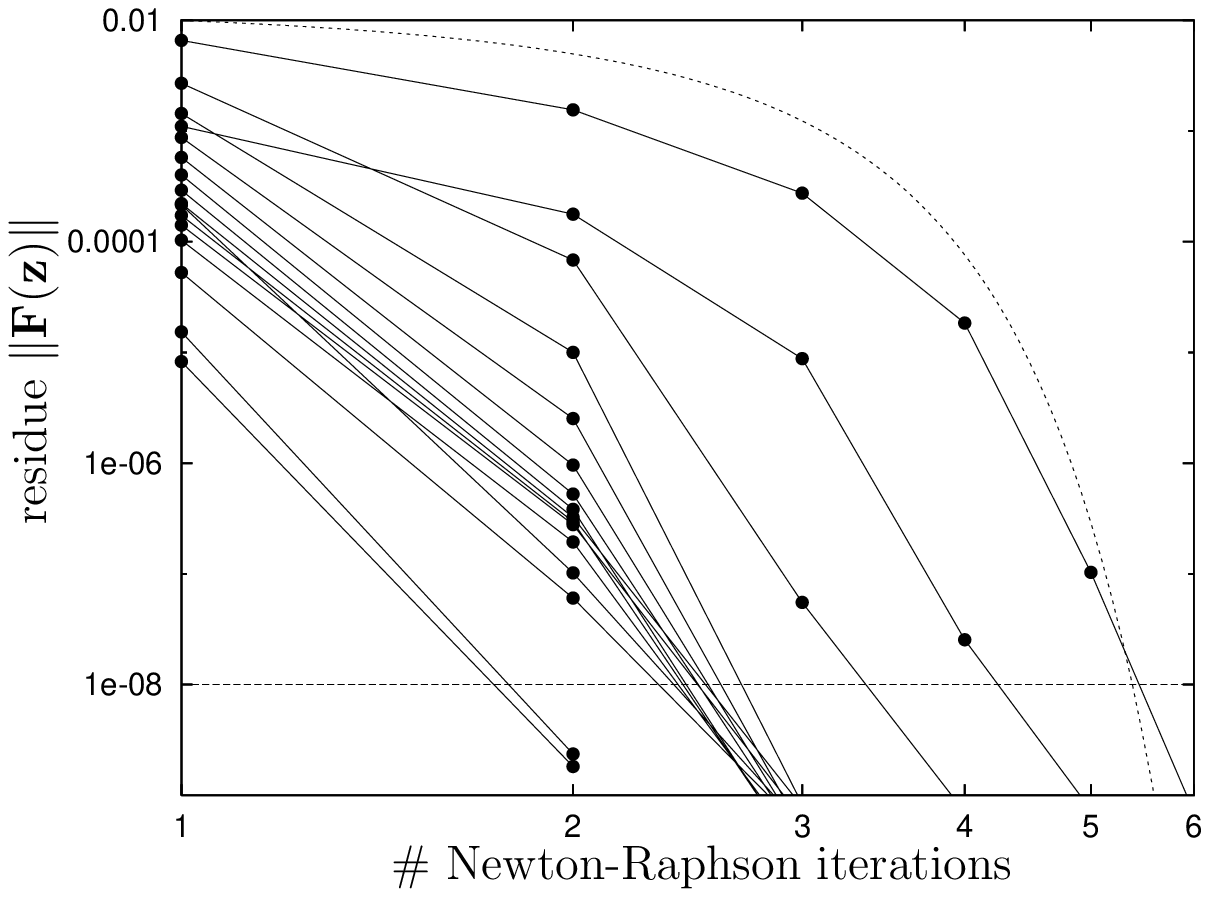,width=180pt}}
\put(185,-3){\epsfig{file=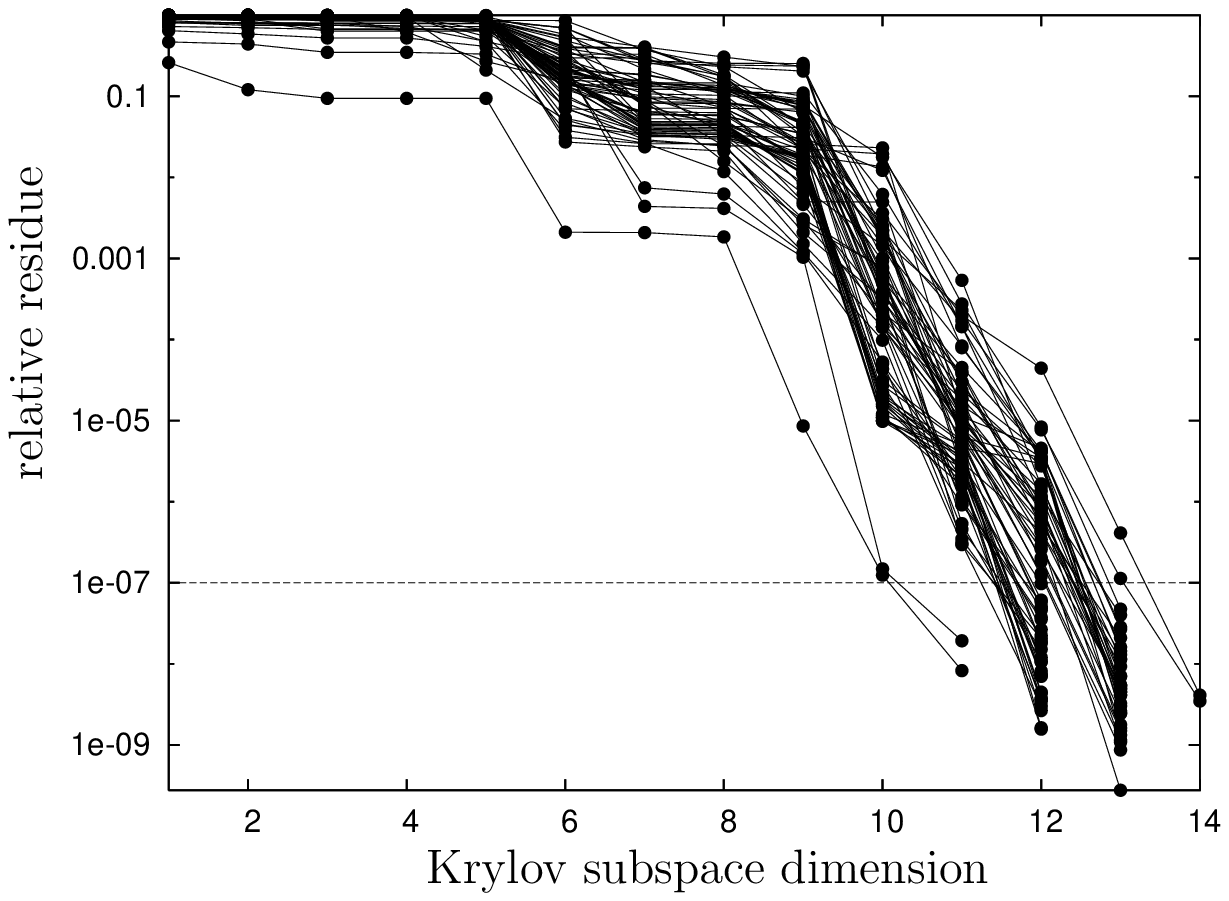,width=180pt}}
\end{picture}
\caption{Convergence results for the computations in the low-order
  model of shear flow. Left: the residue $\|\mathbf{F}(\mathbf{z})\|$ of the correction step versus the number of
Newton-Krylov iterations. The dotted line denotes quadratic
convergence. Iterations were stopped when the residue was less than
$10^{-8}$. Right: the GMRES residue, normalised by
$\|\mathbf{F}(\mathbf{z})\|$, as a function of the Krylov subspace
dimension. These results were obtained for the continuation shown in
Fig. \ref{cont_curve_wal}, with five shooting intervals. By
proposition \ref{max_dim} the maximal subspace dimension is $14$. }
\label{conv_wal}
\end{figure}

Fig. \ref{conv_wal} shows the convergence of the Newton-Krylov
iterations. Clearly, the convergence of the Newton iterations is super
linear and the number of GMRES iterations satisfies Proposition \ref{max_dim}.

In order to compare the multiple shooting algorithm to spectral
collocation, we implemented the basic BVP
(\ref{basicBVP}) in {\sf AUTO} \cite{auto}. The latest version of this
software package allows for thread-parallelisation of the linear
solver so that we can compare the performance of the methods for
different numbers of processors. Fig. \ref{wtime_cpu_wal} shows the
wall time for computing a fixed piece of the continuation curve. For
the multiple shooting we used two different strategies. First, we
fixed the number of shooting intervals to three. In that case, the
computation time decreases approximately by factors of $2/3$ and $1/3$
as we increase the number of CPUs to two and three. After that adding
CPUs has no effect. This result is shown with circles. Then,
took the number of shooting intervals to be equal to the number of
CPUs. In this case, we cannot predict whether the wall time will
decrease because the upper bound on the number of GMRES iterations
increases linearly with the number of shooting intervals. In practice,
we see that the wall time does decrease, albeit slower than linear.
This result is shown with squares. {\sf AUTO} results are shown with triangles.
Clearly, wall time taken by {\sf AUTO} scales nearly linearly over a
large range of numbers of CPUs. Nevertheless, the shooting method is
faster up to six CPUs.
\begin{figure}
\begin{center}
\includegraphics[width=0.7\textwidth]{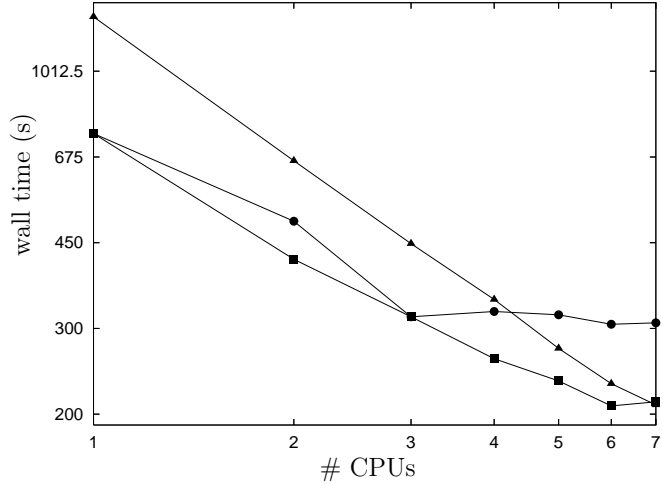}
\end{center}
\caption{Wall time versus the number of CPUs for a representative part of the continuation presented in Fig. \ref{cont_curve_wal}. The dots have been connected by lines for easy comparison.
Triangles represent AUTO results, circles represent shooting on three intervals and squares represent shooting with a variable number of intervals. For precise
parameters see text. The scaling of the computation time is linear for the spectral collocation used by AUTO. For shooting on few intervals the wall time is essentially set by
the longest integration time. }
\label{wtime_cpu_wal}
\end{figure}

\subsection{Transition in plane Couette flow: many degrees of freedom\label{ex:cou}}

Next we compute numerically the unstable manifold of the gentle periodic
orbit in a full plane Couette system~\cite{kawa}.
This periodic orbit has been obtained at Reynolds number $Re=400$
for the minimal periodic box
$(L/H,W/H)=(2.76,1.88)$~\cite{hamilton}.
The linear stability analysis of the periodic orbit has shown that
there is only one unstable multiplier, implying that the periodic orbit and
its stable manifold form the basin boundary between laminar and turbulent
attractors~\cite{kawa05}.
Transitions starting with a disturbance of the laminar flow just beyond a critical amplitude
can be described in terms of the unstable manifold of the periodic orbit.

In the computation of the unstable manifold of the gentle periodic orbit
we perform direct numerical simulations for the imcompressible Navier--Stokes
equation by use of a pseudo-spectral code~\cite{itano}.
In this code the streamwise volume flux
and the spanwise mean pressure gradient are set to be zero.
The dealiased Fourier expansions are employed in the streamwise and
spanwise directions, and the modified Chebyshev-polynomial expansion in the wall-normal
direction.
Nonlinear terms in the Navier--Stokes equation
are computed on $8448$ ($=16\times 33\times 16$
in the streamwise, wall-normal and spanwise direction) grid points.
The spatial symmetries
observed in a turbulent state of the minimal periodic box
are imposed on the periodic orbit~\cite{kawa}.
The dealiased symmetric flow field satisfying noslip and impermeable boundary
conditions has $2477$ degrees of freedom.
Time integration of the equation
is performed by using the explicit Adams--Bashforth method for the
nonlinear terms and the implicit Crank--Nicholson scheme for the
viscous terms.

Fig. \ref{man_cou} shows a piece of the unstable manifold projected on
energy input rate, energy dissipation rate and the energy contained in
the velocity field after subtracting the average velocity in the
stream wise direction.  In this computation three shooting intervals
were used. The first boundary condition is given by constant
integration time and the second by a Poincar\'e plane of intersection
on which the sum of energy input and dissipation rate is constant.
The rightmost boundary condition is given by
constant arc length. At the edge of the computed piece of manifold, the values of the energy input
and dissipation rate are comparable to their time mean value in turbulent flow.

In temporal evolution along the unstable manifold
the flow has been found to exhibit the same spatiotemporal
behaviour as observed in the transition to turbulence
in minimal plane Couette flow.
Low-velocity streaks develop with an oscillatory bend in the spanwise direction.
During this process the spanwise bend of the streak is enhanced to generate
a pair of staggered counter-rotating streamwise vortices in the flanks of
the low-velocity streak.
The generated streamwise vortices appear to be significant around the intersection
$e+\EuScript{E}=6$ which is comparable to the value corresponding to
a turbulent state.
%%%%%%%%%%%%%%%%%%%%%%%%%%%%%%%%%
\begin{figure}
\begin{center}
\includegraphics[width=0.8\textwidth]{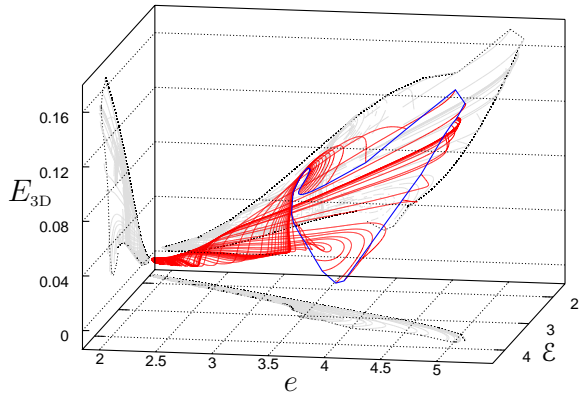}
\end{center}
\caption{A piece of the unstable manifolds of the quiescent periodic
  orbit in plane Couette flow, projected on energy input rate $e$,
  energy dissipation rate $\EuScript{E}$ and $E_{\rm \scriptstyle
    3D}$, the energy contained in the
  velocity field after subtraction of the average in the stream wise direction. The energy input and dissipation rate have been
  normalised by their value in laminar flow. This figure can be
  compared to Fig. 5 of Kawahara and Kida \cite{kawa}, in which a
  single orbit contained in the unstable manifold is
  displayed. Although the computed piece of manifold stretches from
  the near-laminar to the turbulent region in phase space, it looks
  like a cylinder and the intersection with a Poincar\'e plane is a
  simple closed curve.
The corresponding continuation diagram is shown in
Fig. \ref{cont_curve_cou}.}
\label{man_cou}
\end{figure}

Fig. \ref{cont_curve_cou} shows the continuation curve corresponding
to the piece of manifold of Fig. \ref{man_cou}. There are two points
where orbits are tangent to the Poincar\'e plane of intersection. The
first and the last point in this diagram correspond to orbits which
coincide in the projection of Fig. \ref{man_cou} but in phase space
are related by a discrete symmetry composed of a reflection in the
span wise direction, combined with a shift in the stream wise
direction. This explains why the two orbits differ only by half the
period of the quiescent periodic orbit.
\begin{figure}
\begin{center}
\includegraphics[width=0.7\textwidth]{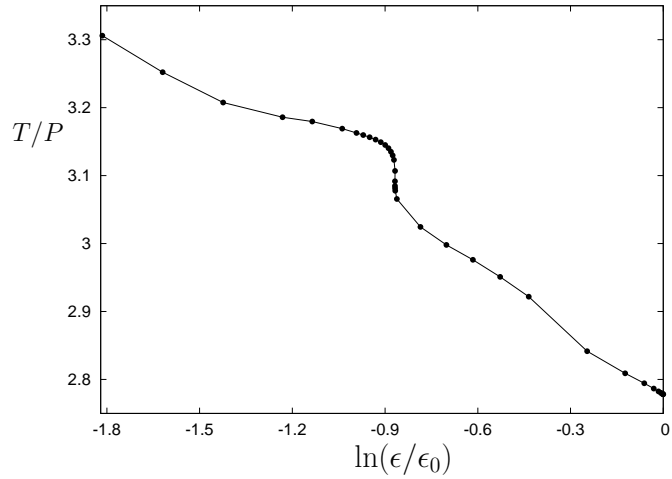}
\end{center}
\caption{The continuation curve corresponding to the piece of manifold
  shown in Fig. \ref{man_cou}. On the horizontal axis the small
  parameter which fixes the left boundary condition (\ref{basicBVP}), on the vertical
  axis the total integration time, normalised by the period of the
  quiescent periodic orbit. In this continuation, there were
  three shooting intervals and the boundary conditions were given by
  fixed integration time, the Poincar\'e section $e+\EuScript{E}=6.5$ and
  fixed arc length, respectively. The first and the last point in this
  continuation correspond to orbits related be a discrete spatial
  symmetry which leaves the energy invariant. 
  }
\label{cont_curve_cou}
\end{figure}

The convergence results for our computation in Couette flow are shown
in Fig. \ref{conv_cou}. Like for the low-order model, the convergence
of the Newton-Krylov iteration is super linear and the dimension of
the Krylov subspace satisfies proposition \ref{max_dim}.
\begin{figure}
\begin{picture}(200,200)
\put(0,0){\epsfig{file=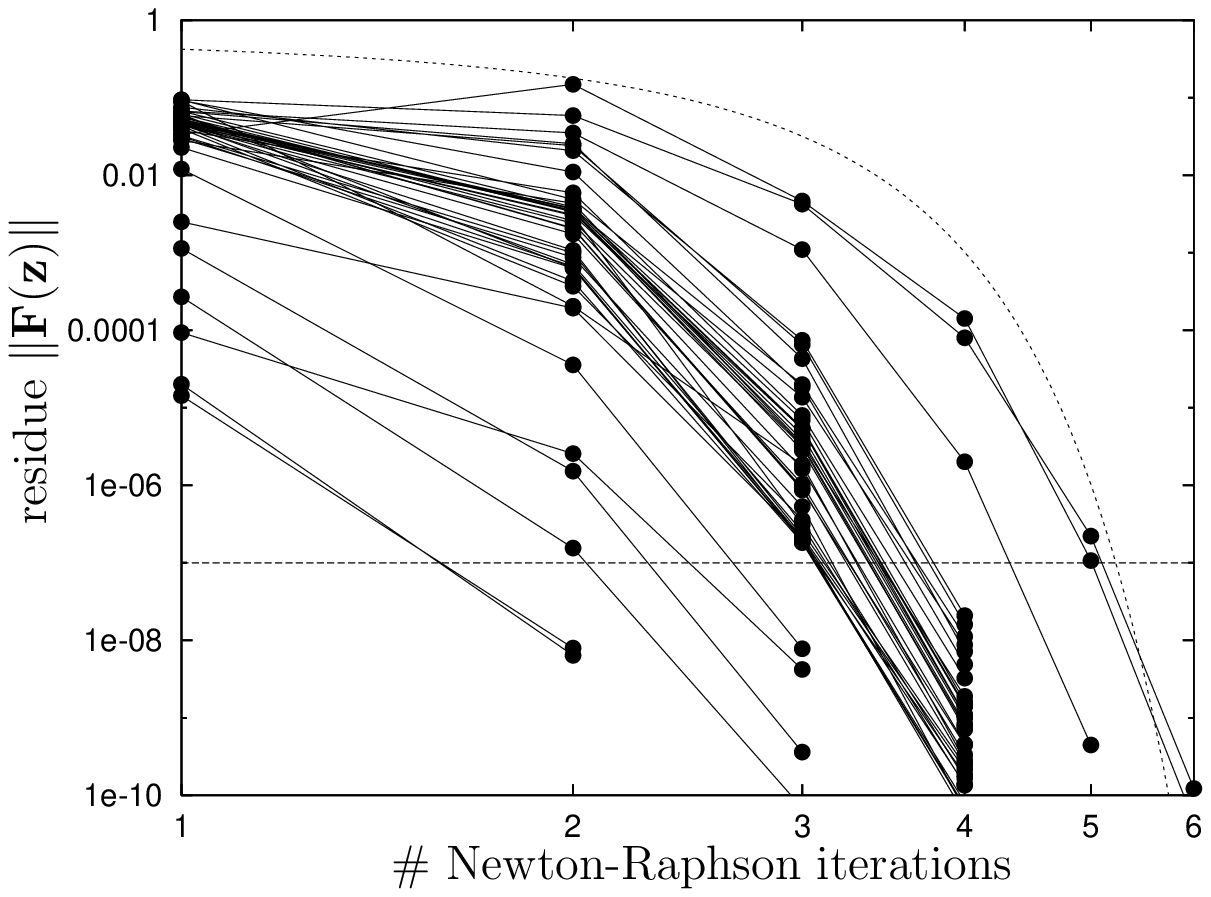,width=180pt}}
\put(185,-3){\epsfig{file=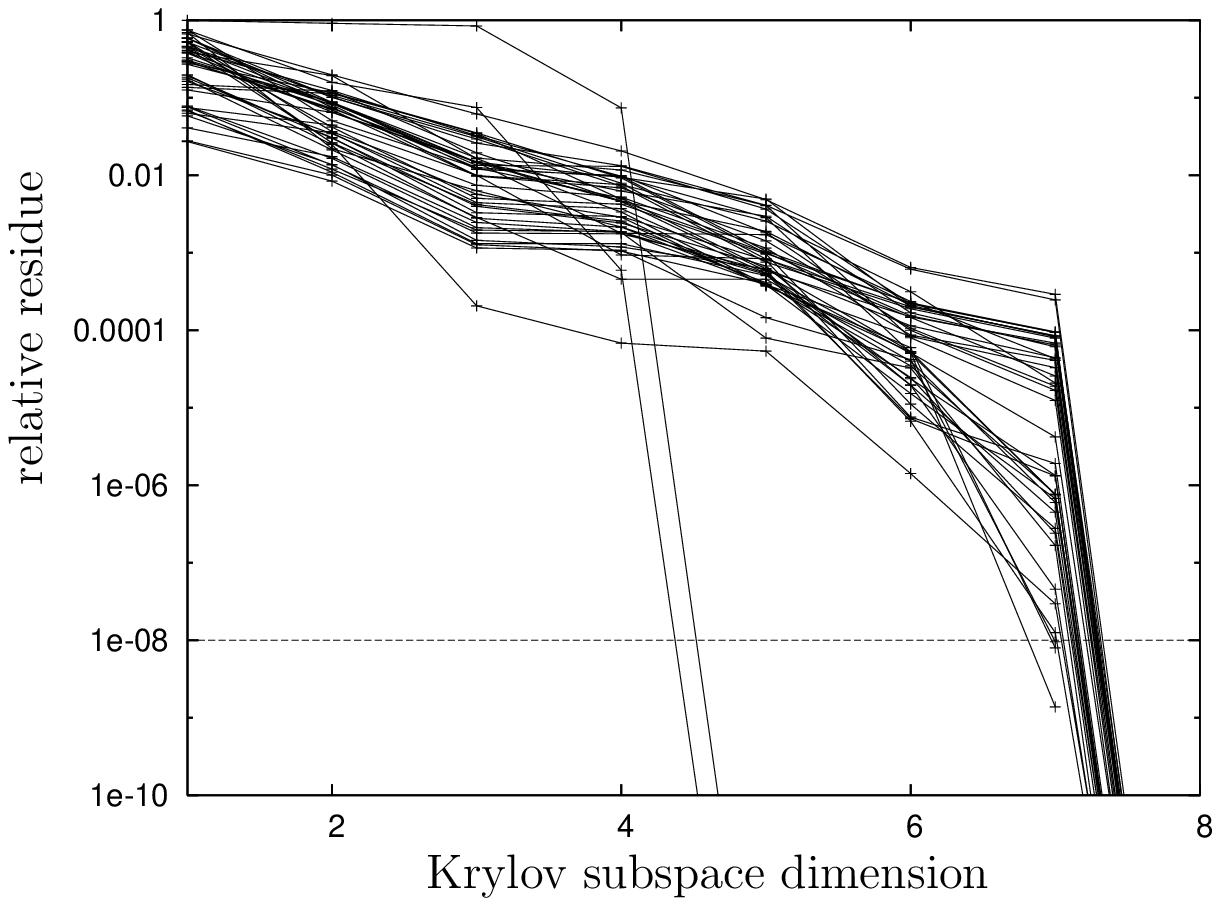,width=180pt}}
\end{picture}
\caption{Convergence results for the computations in plane Couette
  flow. Left: the residue $\|\mathbf{F}(\mathbf{z})\|$ of the correction step versus the number of
Newton-Krylov iterations. The dotted line denotes quadratic
convergence. Iterations were stopped when the residue was less than
$10^{-7}$. Right: the GMRES residue, normalised by
$\|\mathbf{F}(\mathbf{z})\|$, as a function of the Krylov subspace
dimension. These results were obtained for the continuation shown in
Fig. \ref{cont_curve_cou}, with three shooting intervals. By
proposition \ref{max_dim} the maximal subspace dimension is $8$. }
\label{conv_cou}
\end{figure}

Finally, we measured the wall time of the computation as a function of
the number of CPUs employed, as shown in Fig. \ref{wtime_cou}. In this computation we used five shooting
intervals of approximately equal length. With five CPUs active the
wall time was one fifth of the wall time with a single CPU to within
5\%.
The data communicated by {\sf MPI} comprises only a small number of
vectors of length $n$, mounting to less than a megabyte, and the code
can easily be run on an ordinary multi core computer.
\begin{figure}
\begin{center}
\includegraphics[width=0.7\textwidth]{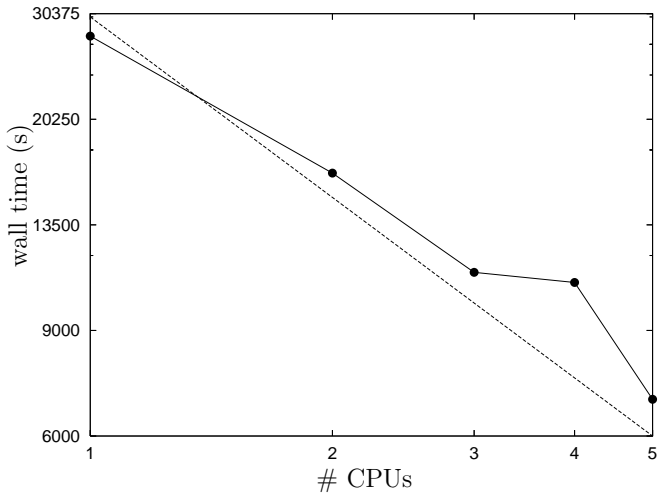}
\end{center}
\caption{Wall time versus the number of CPUs for a representative part
  of the continuation presented in Fig. \ref{cont_curve_cou}. The dots
  have been connected by lines for easy comparison. In each
  computation five shooting intervals of approximately equal
  integration time were used. The computation time
  decreases approximately by factors of $3/5$, $2/5$, $2/5$ and $1/5$
  when increasing the number of CPU's. This behaviour confirms that
  nearly all time is spent in time stepping the (linearised)
  Navier-Stokes equations in parallel and the parallelisation is
  nearly 100\% efficient. The dashed line denotes linear scaling.
 }
\label{wtime_cou}
\end{figure}

\section{Conclusion}

We have presented an efficient and flexible method for computing 2D
invariant manifolds of dynamical systems with any number of degrees of
freedom. This method is based on the orbit continuation algorithm
\cite[Sec. 3]{kraus}.The main issue in this computation is the sensitive
dependence on initial conditions. Our algorithm deals with this
problem by the use of multiple shooting. By we controlling the
integration time on each shooting interval we ensure the computation
is stable and well-conditioned. By choosing different boundary
conditions on each interval we can select different parts of the
manifold to compute. The time integrations of the dynamical system and
its linearisation on different shooting intervals can be efficiently
executed in parallel.

The multiple shooting orbit continuation leads to linear systems of a
size which grows as the product of the number of degrees of freedom of
the dynamical system and the number of shooting intervals. We have
implemented GMRES \cite{saad} as a linear solver.
Remarkably, Proposition \ref{max_dim} states that the maximal number of GMRES
iterations for each Newton update step is linear in the number of
shooting intervals but does not depend on the number of degrees of
freedom. In practice, this means that if we compute the same piece of
manifold several times with an increasing numbers of shooting
intervals and parallel processes, there is no guarantee that the
computation time will decrease. Normally, however, we will be
computing as large a piece as we can with the minimal number of
shooting intervals that guarantees convergence. 
For this approach, our convergence result implies that the computation
time will depend only on the time required by the time-stepping and 
on the condition of the linear system, not its size.

Both example computations in this paper concerned 2D unstable
manifolds of periodic solutions to strongly dissipative
systems. However, the algorithm has wider applicability. For stable
manifolds one can reverse the direction of time, or reverse the role
of the left and right boundary conditions. For (un)stable manifolds of
equilibria, the left boundary condition can be formulated in terms of
the two (un)stable eigenvectors. Moreover, the result on convergence
of GMRES iterations does not rely on any assumptions on the properties
of the linearised system. Thus, we expect the Newton-Krylov orbit
continuation algorithm to be equally suitable for the computation of
manifolds in general high-dimensional dynamical systems such as
networks of chaotic oscillators.

\section*{Acknowledgements}

The authors would like to thank Sebius Doedel for many useful discussions and Sadayoshi Toh
for making available his time-stepping code for plane Couette flow.

\bibliographystyle{siam}
\bibliography{mfcum}
\end{document}